\newtheorem{theorem}{Theorem}[section]
\newtheorem{lemma}{Lemma}[section]
\newtheorem{question}{Question}[section]
\newtheorem{example}{Example}[section]
\newtheorem{remark}[theorem]{Remark}
\begin{document}

\title{Caratheodory's solution of the Cauchy problem and question Z.Grande}

\author{Volodymyr Mykhaylyuk, Vadym Myronyk}
\address{ Chernivtsi National University,
                    Department of Mathematical Analysis, Department of Algebra and Computer Science, Ukraine}
\date{}

\subjclass[2010]{Primary 26B05, 28A05, 28a10}

\keywords{Caratheodory's solution, quasicontinuity, semicontinuity, $sup$-measurability}

\begin{abstract}
It is shown that for a function $f:\mathbb R^2\to \mathbb R$ which is measurable with respect to the first variable and upper semicontinuous quasicontinuous and increasing with respect to the second variable there exists a Caratheodory's solution $y(x)=y_0+\int\limits_{x_0}^xf(t,y(t))d\mu(t)$ of the Cauchy problem $y'(x)=f(x,y(x))$ with the initial condition $y(x_0)=y_0$. There are constructed examples which indicate to essentiality of condition of increasing and give the negative answer to a question of Z.~Grande.
\end{abstract}

\email{vmykhaylyuk@ukr.net, vadmyron@gmail.com}

\maketitle

\section{Introduction}

According to Scorza Dragoni's classical theorem \cite{SD}, for a function $f:[a,b]\times [c,d]\to\mathbb R$ which is measurable with respect to the first variable and continuous with respect to the second variable and every $\varepsilon>0$ there exists a set $A\subseteq [a,b]\times [c,d]$ such that $\mu(([a,b]\times [c,d])\setminus A)<\varepsilon$ and the restriction $f|_A$ is jointly continuous. In particular, $f$ is an $(L)-sup$-measurable, that is for every measurable function $\varphi:[a,b]\to [c,d]$ the function $f(x,\varphi(x))$ is measurable. This property plays an important role in the Caratheodory Differential Equations Theory (see \cite{F}) and was developed by many mathematicians (see \cite{T}, \cite{K} and the literature given there). The next result was obtained by Z.~Grande in recent paper \cite{G}.

\begin{theorem}\label{th:1.1}
Let a bounded fuction $f:\mathbb{R}^2\rightarrow\mathbb{R}$ satisfies the following conditions:
\begin{enumerate}
  \item[(1)] for every $x\in X$ the vertical section $f^x:\mathbb{R}\rightarrow\mathbb{R}$ is quasicontinuous and upper semicontinuous;
  \item[(2)] there exists a countable dense in $\mathbb{R}$ set $B$ such that for every $y\in B$ the horizontal section $f_y:\mathbb{R}\rightarrow\mathbb{R}$ is Lebesgue measurable.
\end{enumerate}
Then $f$ is an $(L)-sup$-measurable function.
\end{theorem}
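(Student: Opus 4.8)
The plan is to derive an exact pointwise representation of $f$ in terms of the horizontal sections known to be measurable, and then read off $(L)$-sup-measurability from it. First I would fix $x$ and exploit the two hypotheses on the vertical section $g=f^x$ simultaneously. Quasicontinuity of $g$ at a point $y$ means that for every $\varepsilon>0$ and every neighbourhood $U$ of $y$ there is a nonempty open set $W\subseteq U$ with $|g-g(y)|<\varepsilon$ on $W$; since $B$ is dense, $W$ meets $B$ in infinitely many points, so I can produce $b\in B$ with $b\neq y$ arbitrarily close to $y$ and $g(b)$ arbitrarily close to $g(y)$. Choosing $\varepsilon=1/n$ and $U=(y-\tfrac1n,y+\tfrac1n)$ yields a sequence realizing $g(y)$, whence $g(y)\le\limsup_{B\ni b\to y}g(b)$. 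Upper semicontinuity of $g$ gives the reverse inequality $\limsup_{B\ni b\to y}g(b)\le g(y)$. The two together pin down, for all $(x,y)$, the formula
\begin{equation*}
f(x,y)=\inf_{n\in\mathbb N}\,\sup\bigl\{f(x,b):b\in B,\ 0<|b-y|<\tfrac1n\bigr\}.
\end{equation*}

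Next, given an arbitrary measurable $\varphi:\mathbb R\to\mathbb R$, I would substitute $y=\varphi(x)$ into this identity to obtain $f(x,\varphi(x))=\inf_{n}h_n(x)$, where $h_n(x)=\sup\{f(x,b):b\in B,\ 0<|b-\varphi(x)|<1/n\}$. Since a countable infimum of measurable functions is measurable, it suffices to prove that each $h_n$ is measurable.

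For the measurability of $h_n$ I would use that $B$ is countable to rewrite $h_n$ as a countable supremum. For $b\in B$ put $E_{n,b}=\varphi^{-1}\bigl((b-\tfrac1n,b)\cup(b,b+\tfrac1n)\bigr)$, which is measurable because $\varphi$ is measurable and the target set is open, and let $g_{n,b}$ equal $f_b$ on $E_{n,b}$ and $-\infty$ elsewhere. Each $g_{n,b}$ is measurable, since $f_b$ is measurable for $b\in B$; density of $B$ guarantees that for every $x$ at least one $b$ contributes, so $h_n=\sup_{b\in B}g_{n,b}$ is a genuine countable supremum of measurable functions, finite by boundedness of $f$, hence measurable. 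Taking the infimum over $n$ then shows $x\mapsto f(x,\varphi(x))$ is measurable, which is precisely $(L)$-sup-measurability.

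I expect the representation formula of the first step to be the crux: its content is that quasicontinuity forces the limit superior of $g$ over $B$ to be \emph{at least} $g(y)$, because there is a sequence in $B$ realizing $g(y)$, while upper semicontinuity forces it to be \emph{at most} $g(y)$, so the two conditions jointly identify $f(x,y)$ as a $\limsup$ over the dense set $B$. Everything afterward is standard manipulation of countable suprema and infima of measurable functions; the only points to watch are keeping the punctured condition $0<|b-y|$ consistent between the two variables and invoking boundedness to keep the suprema finite.
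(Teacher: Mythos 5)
Your proof is correct, but it follows a genuinely different route from the paper's. Your crux is the exact pointwise representation
\[
f(x,y)=\inf_{n\in\mathbb N}\,\sup\{f(x,b):b\in B,\ 0<|b-y|<\tfrac{1}{n}\},
\]
where quasicontinuity plus density of $B$ forces each inner supremum to be at least $f(x,y)$, and upper semicontinuity caps it by $f(x,y)+\varepsilon$ once $1/n$ is small enough; $(L)$-sup-measurability then falls out of countable sup/inf operations, using countability of $B$ (for the countable supremum $\sup_{b\in B}g_{n,b}$) and boundedness of $f$ (to keep values finite; the value $-\infty$ off $E_{n,b}$ is harmless if one works with extended-real-valued measurable functions, or it can be replaced by any constant below $\inf f$). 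The paper proves the theorem through an approximating sequence instead of an exact formula: for each $n$ it fixes a partition of the $y$-axis into intervals $I_{n,k}=(a_{n,k},a_{n,k+1})$ with endpoints in $B$ and mesh $<1/n$, defines $f_n(x,y)=\min\{n,\sup_{t\in I_{n,k}}f(x,t)\}$ for $y\in I_{n,k}$ (and $f(x,a_{n,k})$ at the nodes), shows $f_n\to f$ pointwise via upper semicontinuity, and establishes measurability of each $f_n(x,g(x))$ by splitting the domain according to the cell containing $g(x)$, with quasicontinuity and density of $B$ serving to write the superlevel sets of $x\mapsto\sup_{t\in I_{n,k}}f(x,t)$ as unions over $b\in B\cap I_{n,k}$ of superlevel sets of the measurable sections $f_b$. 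Both arguments rest on the same two pillars---quasicontinuity transfers suprema over small intervals to suprema over $B$, while upper semicontinuity controls the error as the intervals shrink---but your moving punctured window centered at $y$ yields an identity rather than a limit and dispenses with the truncation $\min\{n,\cdot\}$, which is cleaner under the stated hypotheses; conversely, the paper's fixed-grid-plus-truncation machinery is exactly what allows its generalization in Section 2 to unbounded $f$, merely dense (not necessarily countable) $B$, and vertical sections that are quasicontinuous and upper semicontinuous only for almost every $x$.
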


In \cite{G} Z.~Grande posed the following question in the connection with Theorem \ref{th:1.1}.

\begin{question}[Z.~Grande, \cite{G}]\label{q:1.2}
Let a locally bounded function $f:\mathbb{R}^2\rightarrow\mathbb{R}$ satisfies conditions $(1)$ and $(2)$ of Theorem \ref{th:1.1}. Does there exist a Caratheodory's solution of the Caushy problem $y'(x)=f(x,y(x))$ with the initial condition $y(x_0)=y_0$?
\end{question}

In this paper we give a more general variant of Theorem \ref{th:1.1} and apply it to show that under more general assumptions than in Question \ref{q:1.2} there exists a unique maximal weak variant of Caratheodory's solution of the Caushy problem. This implies the positive answer to Question \ref{q:1.2}, if all vertical sections $f^x$ are increasing. Moreover, we construct two examples showing the essentiality of condition of increasing and give the negative answer to Question \ref{q:1.2}, even when $f$ depends on the second variable only and have the Darboux property.

\section{Measurability of Caratheodory superposition}

 For any function $f:X\times Y\to\mathbb R$ and for all $x\in X$ and $y\in Y$ we put $f^x(y)=f_y(x)=f(x,y)$.

The  following theorem is a more general variant of Theorem \ref{th:1.1}. It shows that the condition of boundness of $f$ in Theorem \ref{th:1.1} is not necessary.

 \begin{theorem}\label{t:3.1}
Let a function $f:\mathbb{R}^2\rightarrow\mathbb{R}$ satisfies the following conditions:
\begin{enumerate}
  \item[(1)] there exists a set $A\subseteq\mathbb{R}$ with $\mu(\mathbb{R}\setminus A)=0$ such that for every $x\in A$ the vertical section $f^x:\mathbb{R}\rightarrow\mathbb{R}$ is quasicontinuous and upper semicontinuous;
  \item[(2)] there exists a dense subset $B\subseteq \mathbb R$ such that for every $y\in B$ the horizontal section $f_y:\mathbb{R}\rightarrow\mathbb{R}$ is Lebesgue measurable.
\end{enumerate}
Then $f$ is $(L)-sup$-measurable.
\end{theorem}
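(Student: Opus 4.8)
The plan is to reduce the evaluation of each vertical section $f^x$ at an arbitrary height to a countable operation involving only values on the dense set $B$, where horizontal measurability is guaranteed. First I would note that, since $\mathbb R$ is separable, the dense set $B$ contains a countable subset that is still dense in $\mathbb R$; as the horizontal sections $f_y$ remain measurable for $y$ in this smaller set, there is no loss of generality in assuming $B=\{b_n:n\in\mathbb N\}$ is countable.

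The conceptual heart is a representation formula. Fixing $x\in A$ (so $f^x$ is quasicontinuous and upper semicontinuous) and an arbitrary $y_0\in\mathbb R$, I claim that
\[
f^x(y_0)=\inf_{n\in\mathbb N}\ \sup_{\substack{b\in B\\ |b-y_0|<1/n}}f^x(b).
\]
The inequality ``$\le$'' uses quasicontinuity: in every neighbourhood of $y_0$ there is a nonempty open set on which $f^x$ stays within $\varepsilon$ of $f^x(y_0)$, and density of $B$ forces that set to meet $B$, so each inner supremum is at least $f^x(y_0)-\varepsilon$. The inequality ``$\ge$'' uses upper semicontinuity: given $\varepsilon>0$ there is $\delta>0$ with $f^x(y)<f^x(y_0)+\varepsilon$ whenever $|y-y_0|<\delta$, so choosing $1/n<\delta$ bounds the corresponding inner supremum by $f^x(y_0)+\varepsilon$. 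Since the inner suprema are non-increasing in $n$, the infimum is in fact a limit.

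Next I would substitute $y_0=\varphi(x)$ and turn the formula into a measurability statement. Setting $g_n(x)=\sup_{b\in B,\,|b-\varphi(x)|<1/n}f(x,b)$, the representation gives $f(x,\varphi(x))=\lim_n g_n(x)=\inf_n g_n(x)$ for every $x\in A$. To see that $g_n$ is measurable I would rewrite it as a countable supremum indexed by $B$: for each $b\in B$ the preimage $E_{n,b}=\{x:\varphi(x)\in(b-1/n,b+1/n)\}$ is measurable because $\varphi$ is measurable, the horizontal section $f_b=f(\cdot,b)$ is measurable because $b\in B$, and
\[
g_n=\sup_{b\in B}\bigl(f_b\cdot\mathbf 1_{E_{n,b}}+(-\infty)\cdot\mathbf 1_{\mathbb R\setminus E_{n,b}}\bigr)
\]
is then a countable supremum of measurable extended-real-valued functions, hence measurable.

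Finally, $g=\inf_n g_n$ is measurable and coincides with $x\mapsto f(x,\varphi(x))$ on $A$; since $\mathbb R\setminus A$ is a Lebesgue null set and Lebesgue measure is complete, the superposition is measurable, which is exactly $(L)$-$sup$-measurability. The main obstacle I expect is the bookkeeping in this last measurability step: the inner supremum ranges over an index set that varies with $x$, and the work is in converting it into a fixed countable supremum of measurable functions via the preimages $E_{n,b}$. Once the representation formula is established — the precise interplay of quasicontinuity (lower bound) and upper semicontinuity (upper bound) being its crux — the remainder is routine.
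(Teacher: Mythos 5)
Your proof is correct, and it rests on the same two-sided mechanism as the paper's: upper semicontinuity controls $f(x,\cdot)$ from above on small neighbourhoods of the evaluation point, while quasicontinuity together with density of $B$ shows that suprema over such neighbourhoods are attained, up to $\varepsilon$, at points of $B$. The architecture, however, is genuinely different. The paper builds, for each $n$, global approximants $f_n(x,y)=\min\{n,\sup_{t\in I_{n,k}}f(x,t)\}$ on a fixed grid of intervals $I_{n,k}=(a_{n,k},a_{n,k+1})$ with endpoints in $B$ and mesh $<1/n$, proves $f_n\to f$ pointwise on $A\times\mathbb R$ via upper semicontinuity, and then obtains measurability of $x\mapsto f_n(x,g(x))$ by decomposing $\mathbb R$ into the preimages under $g$ of the cells $I_{n,k}$ and of the grid points $a_{n,k}$; quasicontinuity enters only at that stage, to replace $\sup_{t\in I_{n,k}}f(x,t)$ by $\sup_{b\in B\cap I_{n,k}}f(x,b)$. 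You instead prove a single pointwise representation formula with windows centred at the evaluation point, substitute $y_0=\varphi(x)$, and handle the $x$-dependent index set by the indicator/preimage trick, turning each $g_n$ into a fixed countable supremum of measurable extended-real-valued functions. Your route buys three things: it avoids the grid bookkeeping and the separate case $g(x)=a_{n,k}$; it dispenses with the truncation $\min\{n,\cdot\}$ (your early $g_n$ may be $+\infty$, but upper semicontinuity makes them finite for large $n$, so the infimum is still $f(x,\varphi(x))$ on $A$); and it makes explicit a reduction the paper actually needs but leaves implicit, namely that $B$ must first be replaced by a countable dense subset, since the paper's union $\bigcup_{b\in B\cap I_{n,k}}\{x\in A: f(x,b)>\alpha\}$ yields measurability only when it can be taken countable. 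What the paper's construction buys in exchange is a sequence of globally defined approximants $f_n$ on $\mathbb R^2$ converging pointwise to $f$ over $A\times\mathbb R$, which has some independent interest. One small point for your write-up: define $f_b\cdot\mathbf 1_{E_{n,b}}+(-\infty)\cdot\mathbf 1_{\mathbb R\setminus E_{n,b}}$ piecewise (equal to $f_b$ on $E_{n,b}$ and to $-\infty$ elsewhere) rather than arithmetically, to avoid $0\cdot(-\infty)$ conventions.
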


\begin{proof}
For every $n\in \mathbb{N}$ we choose a disjoint countable family $(I_{n,k})_{k\in\mathbb{Z}}$ of  intervals $I_{n,k}=(a_{n,k},a_{n,k+1})$ such that
\begin{enumerate}
  \item [(a)] $a_{n,k}\in B;$
  \item [(b)] $a_{n,k+1}-a_{n,k}<\frac{1}{n};$
  \item [(c)] $\bigcup\limits_{k\in \mathbb{Z}}(a_{n,k},a_{n,k+1})\bigcup \{a_{n,k}:k\in\mathbb{Z}\}=\mathbb{R}.$

\end{enumerate}
Now we put
$$
f_n(x,y)=\left\{\begin{array}{lc}
  \min\{n,\sup\limits_{t\in I_{n,k}}f(x,t)\},\,y\in I_{n,k}, \\
  f(x,a_{n,k}),\, y=a_{n,k}. \\
\end{array}\right.
$$
and show that $\lim\limits_{n\rightarrow \infty} f_n(x,y)=f(x,y)$ for every $x\in A$ and $y\in \mathbb{R}$.

Fix $x_0\in A,\,y_0\in \mathbb{R},\,\varepsilon>0$. Since
$f^{x_0}$ is upper semicontinuous at $y_0$, there exists $\delta >0$
such that $f(x_0,y)\leq f(x_0,y_0)+\varepsilon$ for all $y\in
(y_0-\delta,y_0+\delta)$. Choose $n_0\in\mathbb{N}$ such that
$n_0\geq\max\{f(x_0,y_0)+\varepsilon,\frac{1}{\delta}\}$. Let
$n\geq n_0$. If $y_0\in\{a_{n,k}:k\in\mathbb{Z}\}$, then
$f_n(x_0,y_0)=f(x_0,y_0)$. We assume that $y_0\notin
\{a_{n,k}:k\in\mathbb{Z}\}$ and choose $m\in\mathbb{Z}$ such that
$y_0\in (a_{n,m},a_{n,m+1})$. According to $(b)$ and the choice of $n_0$, we have
$a_{n,m+1}-a_{n,m}< \frac{1}{n}\leq\delta$. Therefore, $I_{n,m}\subseteq
(y_0-\delta,y_0+\delta)$. This implies, in particular, that $f(x_0,y)\leq
f(x_0,y_0)+\varepsilon\leq n_0\leq n$ for every $y\in I_{n,m}$. Now we have
$$
f_n(x_0,y_0)=\min \{n,\sup\limits_{y\in
I_{n,m}}f(x_0,y)\}=\sup\limits_{y\in I_{n,m}}f(x_0,y)\leq
f(x_0,y_0)+\varepsilon.
$$
Using the inequality $f_n(x_0,y_0)\geq f(x_0,y_0)$ we obtain that
$$
|f_n(x_0,y_0)-f(x_0,y_0)|=f_n(x_0,y_0)-f(x_0,y_0)\leq\varepsilon
$$
for all $n\geq n_0$.

Let $g:\mathbb{R}\rightarrow\mathbb{R}$ be a measurable function. To obtain the measurability of $h(x)=f(x,g(x))$ it is sufficient to prove that all functions $h_n(x)=f_n(x,g(x))$ are measurable.

Fix an integer $n$ and put $A_{n,k}=\{x\in
\mathbb{R}:\, g(x)=a_{n,k}\}$, $B_{n,k}=\{x\in \mathbb{R}:\, g(x)\in
I_{n,k}\}$. Clearly, for every $k\in\mathbb{Z}$ the sets $A_{n,k}$ and $B_{n,k}$ are measurable and
$\mathbb{R}=\bigsqcup\limits_{k\in\mathbb{Z}}(A_{n,k}\sqcup B_{n,k})$. Therefore, it is enough to prove that the restrictions $h_n|_{A_{n.k}}$ and
$h_n|_{B_{n.k}}$ are measurable.

Since $h_n|_{A_{n.k}}=f_{a_{n,k}}|_{A_{n.k}}$ and
$f_{a_{n,k}}$ is measurable, $h_n|_{A_{n.k}}$ is measurable.

We note that $h_n|_{B_{n.k}}=\gamma|_{B_{n.k}}$, where $\gamma
(x)=\min \{n,\sup\limits_{t\in I_{n,k}}f(x,t)\}$. We show that $\gamma$ is measurable. It follows from the quasicontinuity of
$f^x$ for $x\in A$, and the density of $B$ that
$$
\{x\in A:\gamma(x)>\alpha\}=\bigcup\limits_{b\in B\cap
I_{n,k}}\{x\in A:f(x,b)>\alpha\}.
$$
Therefore, condition $(2)$ implies that $\{x\in A:\gamma(x)>\alpha\}$
ia a measurable set for every $\alpha\in\mathbb{R}$. Thus, $\gamma |_A$ is a measurable function, hence $\gamma$ is measurable. Therefore, $h_n|_{B_{n.k}}$ is measurable.
\end{proof}

\section{Caratheodory's solution for functions with increasing vertical sections}

\begin{lemma}\label{t:4.2}
Let $f:[a,b]\times [c,d]\rightarrow\mathbb{R}$ be $(L)-sup$-measurable, $A\subseteq [a,b]$ be a set with $\mu ([a,b]\setminus A)=0$ such that for every $x\in A$ the vertical section $f^x$ is upper semicontinuous, let $(y_n)_{n=1}^\infty$ be a sequence of measurable functions $y_n:[a,b]\rightarrow [c,d]$ which converges to a measurable function $y_0:[a,b]\rightarrow [c,d]$ pointwisely on $[a,b]$. Then for the function $z:[a,b]\rightarrow [c,d],\,z(x)=\overline{\lim\limits_{n\rightarrow \infty}}f(x,y_n(x))$, we have
$$
\int\limits_a^b z(x)d\mu(x)\leq \int\limits_a^b f(x,y_0(x))d\mu(x).
$$
\end{lemma}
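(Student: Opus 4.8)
The plan is to exploit upper semicontinuity of the vertical sections to obtain a pointwise bound on $z$, and then push this bound through the integral using Fatou's lemma. The key pointwise inequality I expect is
$$
z(x)=\overline{\lim_{n\to\infty}}f(x,y_n(x))\leq f(x,y_0(x))\qquad\text{for every }x\in A.
$$
To see this, fix $x\in A$ and set $\ell=z(x)$. Choose a subsequence $(y_{n_j}(x))_j$ along which $f(x,y_{n_j}(x))\to\ell$. Since $y_n(x)\to y_0(x)$, we also have $y_{n_j}(x)\to y_0(x)$. Now upper semicontinuity of $f^x$ at the point $y_0(x)$ says precisely that $\overline{\lim}_{y\to y_0(x)}f^x(y)\le f^x(y_0(x))=f(x,y_0(x))$, so passing to the limit along the subsequence gives $\ell\le f(x,y_0(x))$. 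This is the heart of the argument, and it is where condition that $x\in A$ (so that $f^x$ is upper semicontinuous) is used.

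Once this pointwise bound is in hand on the full-measure set $A$, the integral inequality follows by monotonicity of the Lebesgue integral:
$$
\int_a^b z(x)\,d\mu(x)\leq\int_a^b f(x,y_0(x))\,d\mu(x),
$$
since the two integrands differ only on the null set $[a,b]\setminus A$. For this I need to know the integrands are measurable: $x\mapsto f(x,y_0(x))$ is measurable because $f$ is $(L)$-$sup$-measurable and $y_0$ is measurable; and $x\mapsto f(x,y_n(x))$ is measurable for the same reason, so $z$, being a countable upper limit of measurable functions, is itself measurable. I should also note that all integrands take values in $[c,d]$, hence are bounded and integrable on $[a,b]$, so there are no integrability obstructions and the monotonicity step is unproblematic.

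The main obstacle, and the only genuinely delicate point, is the reduction to a subsequence that simultaneously realizes the $\limsup$ of $f(x,y_n(x))$ and converges in the second argument. This is routine: any subsequence along which $f(x,y_{n_j}(x))$ converges to $z(x)$ automatically has $y_{n_j}(x)\to y_0(x)$, because the whole sequence $y_n(x)$ converges. Thus the upper semicontinuity estimate applies directly without needing any further extraction. A secondary subtlety worth recording is that upper semicontinuity is being used in its sequential form at an arbitrary point of accumulation, which is exactly what the definition via $\overline{\lim}$ provides; no continuity or monotonicity of $f^x$ is required here, only the one-sided control from above that semicontinuity supplies.
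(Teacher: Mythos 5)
Your proposal is correct and follows essentially the same route as the paper: establish the pointwise bound $z(x)\leq f(x,y_0(x))$ on the full-measure set $A$ using upper semicontinuity of $f^x$ at $y_0(x)$ together with $y_n(x)\to y_0(x)$, then integrate, with measurability of $x\mapsto f(x,y_0(x))$ supplied by $(L)$-$sup$-measurability. The only cosmetic differences are that the paper runs the $\varepsilon$--$\delta$ form of semicontinuity instead of your subsequence/sequential form, and that you additionally make explicit the measurability of $z$ as a $\limsup$ of measurable functions, which the paper leaves implicit.
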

\begin{proof}
We show firstly that
$$
z(x)\leq f(x,y_0(x))
$$
for every $x\in A$.

Fix $x\in A$ and $\varepsilon >0$. Since $f^x$ is upper semicontinuous at $y_0(x)$, there exists $\delta >0$ such that
$$
f(x,y)<f(x,y_0(x))+\varepsilon
$$
for all $y\in [c,d]\cap (y_0(x)-\delta,y_0(x)+\delta)$. Using $\lim\limits_{n\rightarrow \infty}y_n(x)= y_0(x)$ we obtain that there exists $N\in\mathbb{N}$ such that
$y_n(x)\in (y_0(x)-\delta,y_0(x)+\delta)$
for all $n\geq N$. Thus,
$f(x,y_n(x))< f(x,y_0(x))+\varepsilon$
for all $n\geq N$. Therefore,
$\overline{\lim\limits_{n\rightarrow \infty}}f(x,y_n(x))\leq f(x,y_0(x))+\varepsilon$
for every $\varepsilon >0$. Hence,
$$z(x)=\overline{\lim\limits_{n\rightarrow \infty}}f(x,y_n(x))\leq f(x,y_0(x)).$$
Since $f$ is an $(L)-sup$-measurable function, the function $f(x,y_0(x))$ is measurable. It remains to integrate this inequality on $[a,b]$.
\end{proof}

\begin{theorem}\label{t:4.3}
Let a function $f:[a,b]\times\mathbb{R}\rightarrow\mathbb{R}$ satisfies the following conditions:
\begin{enumerate}
  \item [(1)] there exists a set $A\subseteq [a,b]$ with $\mu (A)=b-a$ such that for every $x\in A$ the vertical section $f^x:\mathbb{R}
  \rightarrow\mathbb{R}$ is quasicontinuous and upper semicontinuous;
  \item [(2)] there exists an everywhere dense set $B\subseteq\mathbb{R}$ such that for every $y\in B$ the horizontal section $f_y:[a,b]\rightarrow\mathbb{R}$ is measurable;
  \item [(3)] there exists an integrable function $\varphi :[a,b]\rightarrow\mathbb{R}$ such that $|f(x,y)|\leq\varphi(x)$ for every $(x,y)\in A\times\mathbb{R}$.
\end{enumerate}
Then for every $y_0\in\mathbb{R}$ there exists an unique maximal absolutely continuous function $z_0:[a,b]\rightarrow\mathbb{R}$ in the set of all absolutely continuous functions $z:[a,b]\rightarrow\mathbb{R}$ with $z(x_2)-z(x_1)\leq\int\limits_{x_1}^{x_2}f(t,z(t))d\mu(t)$ for arbitrary $a\leq x_1\leq x_2\leq b$  such that $z_0(a)=y_0$ and
$$
z_0(x_2)-z_0(x_1)\leq\int\limits_{x_1}^{x_2}f(t,z_0(t))d\mu(t)
$$
for every $a\leq x_1\leq x_2\leq b$.

If, moreover, $f$ satisfies the condition
\begin{enumerate}
  \item [(4)] for every $x\in A$ the vertical section $f^x:\mathbb{R}
  \rightarrow\mathbb{R}$ is increasing,
\end{enumerate}
then $z_0(x)=y_0+\int\limits_{a}^{x}f(t,z_0(t))d\mu(t)$.
\end{theorem}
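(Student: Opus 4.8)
The plan is to realise $z_0$ as the pointwise supremum of the family
$$
\mathcal{S}=\Bigl\{z:[a,b]\to\mathbb{R}\ \text{absolutely continuous}:\ z(a)=y_0,\ z(x_2)-z(x_1)\le\int\limits_{x_1}^{x_2}f(t,z(t))\,d\mu(t)\ \text{for}\ a\le x_1\le x_2\le b\Bigr\},
$$
and to show this supremum is again absolutely continuous and lies in $\mathcal{S}$. First I would note that, by Theorem \ref{t:3.1}, $f$ is $(L)$-$sup$-measurable, so $t\mapsto f(t,z(t))$ is measurable for every measurable $z$ and, by $(3)$, dominated by $\varphi$ on $A$; hence all integrals above are finite. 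For absolutely continuous $z$ the displayed condition is equivalent to $z'(t)\le f(t,z(t))$ for a.e.\ $t$, which is the convenient form. The set $\mathcal{S}$ is nonempty, since $z(x)=y_0-\int_a^x\varphi\,d\mu$ belongs to it (as $f\ge-\varphi$ a.e.), and it is a lattice: if $z_1,z_2\in\mathcal{S}$ then $\max\{z_1,z_2\}\in\mathcal{S}$, because at a.e.\ point the derivative of the maximum coincides with that of whichever function realises it, where the differential inequality is inherited. Thus $\mathcal{S}$ is upward directed.

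Next I would set $z_0(x)=\sup\{z(x):z\in\mathcal{S}\}$. Integrating $z'\le\varphi$ gives $z(x)\le y_0+\int_a^x\varphi\,d\mu$ for all $z\in\mathcal{S}$, so $z_0$ is finite; taking suprema in $z(x_2)\le z(x_1)+\int_{x_1}^{x_2}\varphi\,d\mu$ shows that $x\mapsto z_0(x)-\int_a^x\varphi\,d\mu$ is non-increasing, so $z_0$ has bounded variation and is lower semicontinuous (a supremum of continuous functions). Using directedness together with a countable dense set of abscissae I would extract an increasing sequence $z_n\in\mathcal{S}$ with $z_n\uparrow z_0$, and then, for fixed $x_1\le x_2$, pass to the limit in $z_n(x_2)-z_n(x_1)\le\int_{x_1}^{x_2}f(t,z_n(t))\,d\mu$. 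Combining the reverse Fatou lemma (legitimate since $f(t,z_n(t))\le\varphi$) with Lemma \ref{t:4.2} applied to $(z_n)$ yields
$$
z_0(x_2)-z_0(x_1)\le\int\limits_{x_1}^{x_2}\overline{\lim\limits_{n\to\infty}}f(t,z_n(t))\,d\mu(t)\le\int\limits_{x_1}^{x_2}f(t,z_0(t))\,d\mu(t),
$$
so $z_0$ satisfies the required inequality, while maximality and uniqueness are immediate from its definition as a supremum.

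The hard part will be to show that $z_0$ is genuinely absolutely continuous, i.e.\ that the non-increasing function $z_0-\int_a^{\cdot}\varphi\,d\mu$ carries no singular part (in particular no downward jump). The one-sided inequality is powerless here, since a downward jump only makes its left side more negative, and lower semicontinuity excludes downward spikes but not a left jump; so this step must exploit maximality together with the upper semicontinuity and quasicontinuity of the sections $f^x$. The mechanism I would pursue is that at any point of singular decrease the constraint $z_0'\le f(\cdot,z_0(\cdot))$ becomes strictly slack, so that erasing the singular decrease should produce an absolutely continuous function in $\mathcal{S}$ lying strictly above $z_0$, contradicting maximality; making this precise for a merely upper semicontinuous $f$, without monotonicity, is the delicate point.

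Finally, under the extra hypothesis $(4)$ the difficulty dissolves and the inequality upgrades to equality. Here I would argue directly: for the supremum $z_0$ (a priori only of bounded variation) the inequality just proved gives, for every $x$,
$$
z_0(x)\le y_0+\int\limits_a^x f(t,z_0(t))\,d\mu(t)=:\hat z(x).
$$
The function $\hat z$ is absolutely continuous, satisfies $\hat z(a)=y_0$ and $\hat z\ge z_0$, and since each $f^x$ is increasing we obtain $\hat z'(t)=f(t,z_0(t))\le f(t,\hat z(t))$ a.e., whence $\hat z\in\mathcal{S}$. As $z_0$ dominates every member of $\mathcal{S}$, this forces $\hat z\le z_0$ and therefore $z_0=\hat z$. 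This simultaneously shows that $z_0$ is absolutely continuous and that $z_0(x)=y_0+\int_a^x f(t,z_0(t))\,d\mu(t)$, as required.
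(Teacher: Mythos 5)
Your treatment of the monotone case is fine: under hypothesis $(4)$ the comparison $z_0\le\hat z$, $\hat z\in\mathcal{S}$, hence $\hat z\le z_0$, is exactly the paper's closing step (its $v_0$ is your $\hat z$), and your version even works with $z_0$ a priori only of bounded variation. The genuine gap is that you never prove the first, and main, assertion of the theorem: existence (and uniqueness) of the maximal element under $(1)$--$(3)$ alone. You explicitly defer the absolute continuity of $z_0=\sup\mathcal{S}$ as ``the delicate point'' and only describe a mechanism you would pursue; but that step \emph{is} the existence claim, and it cannot be pushed through from the one-sided constraint you keep. Members of $\mathcal{S}$ satisfy only $z(x_2)-z(x_1)\le\int_{x_1}^{x_2}\varphi\, d\mu$, with no lower bound on increments, so the family is not uniformly absolutely continuous, and its pointwise supremum may in principle carry a singular decreasing part or a downward left jump; lower semicontinuity and the inherited integral inequality do not exclude this, as you yourself observe. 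So, as written, the proposal proves the theorem only when $(4)$ holds.

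The paper removes this obstruction by shrinking the admissible family \emph{before} maximizing: it works inside $\mathcal{F}=\{z\in\mathcal{S}:\ |z(x_2)-z(x_1)|\le\int_{x_1}^{x_2}\varphi\, d\mu \text{ for all } x_1\le x_2\}$, i.e.\ it imposes the two-sided $\varphi$-bound (the set $C_\varphi$ in its proof). This costs nothing: the witness $y_0-\int_a^x\varphi\, d\mu$ lies in $\mathcal{F}$, $\mathcal{F}$ is still closed under pointwise maxima, and under $(4)$ the solution ultimately produced lies in $\mathcal{F}$ as well. What it buys is precisely the missing compactness: the two-sided bound passes to pointwise suprema --- taking suprema in $z(x_1)-\int_{x_1}^{x_2}\varphi\, d\mu\le z(x_2)\le z(x_1)+\int_{x_1}^{x_2}\varphi\, d\mu$ gives $|z_0(x_2)-z_0(x_1)|\le\int_{x_1}^{x_2}\varphi\, d\mu$ --- so the supremum (or a chain limit) is automatically absolutely continuous, by integrability of $\varphi$. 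With this single modification your directed-supremum argument closes: the sup of $\mathcal{F}$ is in $C_\varphi$, the passage to the limit via an increasing sequence, reverse Fatou and Lemma \ref{t:4.2} shows it satisfies the integral inequality, and it is then the greatest, hence the unique maximal, element. (The paper instead runs Zorn's lemma on chains, extracting an increasing sequence that realizes the supremum of $\int_a^b w\, d\mu$ over the chain, and proves uniqueness by patching two maximal elements together; your sup-of-a-directed-family route is a legitimate alternative, but only inside $\mathcal{F}$. Also note that your extraction of a sequence $z_n\uparrow z_0$ at \emph{every} point needs a word of justification, e.g.\ right-continuity of the non-increasing function $z_0-\int_a^{\cdot}\varphi\, d\mu$, which follows from lower semicontinuity, plus separate treatment of the endpoint $b$.)
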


\begin{proof}
We denote by $C_\varphi$ the set of all absolutely continuous functions $z:[a,b]\rightarrow\mathbb{R}$ with $|z(x_2)-z(x_1)|\leq\int\limits_{x_1}^{x_2}\varphi(t)d\mu(t)$ for every $a\leq x_1\leq x_2\leq b$. Let $\mathcal{F}$ be a set of all absolutely continuous functions $z\in C_\varphi$ such that $z(x_2)-z(x_1)\leq\int\limits_{x_1}^{x_2}f(t,z(t))d\mu(t)$ for every $a\leq x_1\leq x_2\leq b$ and $z(a)=y_0$.

We show that $\mathcal{F}\neq\emptyset$. Consider the function $w:[a,b]\rightarrow\mathbb{R},\,w(x)=y_0-\int\limits_a^x\varphi (t)d\mu(t)$. Clearly, $w$ is an absolutely continuous function, because $\varphi$ is integrable. Moreover, $w(a)=y_0$ and
$$
w(x_2)-w(x_1)=-\int\limits_{x_1}^{x_2}\varphi (t)d\mu(t)\leq -\int\limits_{x_1}^{x_2}|f(t,w(t))|d\mu(t)\leq \int\limits_{x_1}^{x_2}f(t,w(t))d\mu(t)
$$
for every $a\leq x_1\leq x_2\leq b$.

Now we show that $\mathcal{F}$ has a maximal element.

Let $\mathcal{A}\subseteq \mathcal{F}$ be a linearly ordered set. We show that $\mathcal{A}$ is upper bounded in $\mathcal{F}$. If $\mathcal{A}$ has a maximal element, then the statement is clear. Suppose that $\mathcal{A}$ have not any maximal element.

Note that
$$
z(x)\leq y_0+\int\limits_a^x\varphi (t)d\mu(t)=w_0(x)
$$
for every $z\in\mathcal{F}$ and every $x\in [a,b]$. Therefore, in particular, $z(x)\leq w_0(x)$ for every $z\in \mathcal{A}$ and every $x\in [a,b]$.

We put $z_1(x)=\sup\limits_{z\in\mathcal{A}}z(x)$. Clearly, $z_1(x)\leq w_0(x)$ for every $x\in[a,b]$. We show that there exists an increasing sequence $(w_n)_{n=1}^\infty$ of function $w_n\in \mathcal{A}$ such that $z_1(x)=\lim\limits_{n\to\infty}w_n(x)$ for every $x\in [a,b]$. We consider the set
$$
I=\left\{\int\limits_a^bw(x)d\mu(x):w\in \mathcal{A}\right\}
$$
and choose an increasing sequence $(\alpha_n)_{n=1}^\infty$ of reals $\alpha_n\in I$ such that $\sup I=\lim\limits_{n\to\infty}\alpha_n$. Now for each $n\in\mathbb N$ we choose $w_n\in \mathcal{A}$ such that $\alpha_n=\int\limits_a^bw_n(x)d\mu(x)$. The linear ordering of $\mathcal{A}$ implies that $(w_n)_{n=1}^\infty$ increases. Suppose that $z_1\ne\lim\limits_{n\to\infty} w_n$. Then there exists $w\in\mathcal{A}$ such that $w\geq w_n$ for every $n\in\mathbb N$. Then $\int\limits_a^bw(x)d\mu(x)=\sup I$. Since $\mathcal{A}$ is a linearly ordered set, $w$ is a maximal element in $\mathcal{A}$, which is impossible. Thus, $z_1=\lim\limits_{n\to\infty} w_n$.

We show that $z_1\in \mathcal{F}$. Fix $a\leq x_1< x_2\leq b$. By Lemma \ref{t:4.2} we have
$$
z_1(x_2)-z_1(x_1)=\lim\limits_{n\to\infty}(w_n(x_2)-w_n(x_1))\leq \overline{\lim\limits_{n\to\infty}}\int\limits_{x_1}^{x_2}f(t,w_n(t))d\mu(t)\leq \int\limits_{x_1}^{x_2}f(t,z_1(t))d\mu(t).
$$
Moreover,
$$
|z_1(x_2)-z_1(x_1)| =\lim\limits_{n\to\infty} |w_n(x_2)-w_n(x_1)| \leq\int\limits_{x_1}^{x_2}\varphi(t)d\mu(t).
$$

Thus, every linearly ordered set $\mathcal{A}\subseteq \mathcal{F}$ is upper bounded. Then, according to Kuratowski-Zorn Lemma, $\mathcal{F}$ has a maximal element $z_0$.

We prove that $z_0$ is a unique. Let $z_0$ and $u_0$ be maximal elements in $\mathcal{F}$. Suppose that $z_0\neq u_0$. For definiteness let $a< x_1< x_2\leq b$ be such that $z_0(x_1)> u_0(x_1)$ and $u_0(x_2)>z_0(x_2)$. Taking into account that $u_0(a)=z_0(a)=y_0$ we found a maximal $x_3\in [a.x_1)$ such that $z_0(x_3)=u_0(x_3)$. We choose a maximal $x_4\in (x_1,x_2)$ such that $z_0(x_4)=u_0(x_4)$. Then $z_0(x)>u_0(x)$ for all $x\in (x_3,x_4)$. We consider the function
$$
v(x)=\left\{\begin{array}{lc}
  u_0(x), & x\in [a,x_3]\cup [x_4,b], \\
  z_0(x), & x\in(x_3,x_4). \\
\end{array}\right.
$$
Since $u_0,z_0\in\mathcal{F}$, $v\in\mathcal{F}$, which contradicts to the maximality of $u_0$.

Now we prove that $z_0(x)=y_0+\int\limits_{a}^{x}f(t,z_0(t))d\mu(t)$ if $f$ satisfies condition $(4)$. We put
$$
v_0(x)=y_0+\int\limits_{a}^{x}f(t,z_0(t))d\mu(t).
$$
It follows from $(3)$ that $v_0\in C_\varphi$. Moreover, since $z_0\in\mathcal{F}$,
$$
z_0(x)=y_0+z_0(x)-z_0(a)\leq y_0+\int\limits_{a}^{x}f(t,z_0(t))d\mu(t)=v_0(x)
$$
for every $x\in[a,b]$. Now for every $a\leq x_1< x_2\leq b$ condition $(4)$ implies that
$$
v_0(x_2)-v_0(x_1)=\int\limits_{x_1}^{x_2}f(t,z_0(t))d\mu(t)\leq \int\limits_{x_1}^{x_2}f(t,v_0(t))d\mu(t).
$$
Thus, $v_0\in\mathcal{F}$. Since $z_0$ is a maximal, $z_0=v_0$.
\end{proof}

\section{Examples}

The following example shows that Question \ref{q:1.2} has the negative answer and condition $(4)$ in Theorem \ref{t:4.3} is essential.

\begin{theorem}\label{t:2.1}
There exists a quasicontinuous upper semicontinuous function $f:\mathbb{R}\rightarrow [-1,1]$ such that the Cauchy problem
$$
\left\{\begin{array}{lc}
  y'(x)=f(y(x)), \\
  y(0)=0, \\
\end{array}\right.\eqno (1)
$$
has not any Caratheodory's solution.
\end{theorem}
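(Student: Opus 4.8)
The plan is to make the negative answer completely explicit by constructing a one–variable relay–type function and showing by hand that no absolutely continuous function can satisfy the integral form of the equation. Concretely, I would take
$$
f(y)=\begin{cases}1,& y\le 0,\\ -1,& y>0,\end{cases}
$$
which is evidently $[-1,1]$–valued, and first verify the regularity required in the statement. Away from $0$ the function is locally constant, hence continuous, so it is both quasicontinuous and upper semicontinuous there. At $0$ we have $f(0)=1$ and $f(y)\le 1$ for every $y$, so the upper semicontinuity inequality $\limsup_{y\to 0}f(y)\le f(0)$ holds; and choosing the nonempty open set $G=(-\eta,0)$ inside any neighbourhood of $0$ (on which $f\equiv 1=f(0)$) verifies quasicontinuity at $0$. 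Thus $f$ satisfies all the hypotheses.

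Next I would argue by contradiction, assuming a Carathéodory solution $y$ on some interval $[0,\delta]$ with $\delta>0$; that is, an absolutely continuous $y$ with $y(0)=0$ and $y'(x)=f(y(x))$ for a.e.\ $x$. The first key step is to show that the zero set $Z=\{x\in[0,\delta]:y(x)=0\}$ is null. Since $y$ is absolutely continuous, its derivative vanishes a.e.\ on the level set $Z$ (a Lebesgue density–point argument: at a density point of $Z$ there are points of $Z$ arbitrarily close, so the difference quotient is $0$). On the other hand, the equation forces $y'(x)=f(0)=1$ for a.e.\ $x\in[0,\delta]$, in particular a.e.\ on $Z$; these are incompatible on a set of positive measure, so $\mu(Z)=0$. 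Consequently, for a.e.\ $x$ one has $y(x)\ne 0$, and the equation reduces to $y'(x)=-1$ where $y(x)>0$ and $y'(x)=1$ where $y(x)<0$.

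The decisive step is to rule out any positive value of $y$. If $y(x_0)>0$ for some $x_0\in(0,\delta]$, let $s=\sup\{x\in[0,x_0]:y(x)=0\}$; by continuity $y(s)=0$ and $y>0$ on $(s,x_0]$, so $y'=-1$ a.e.\ on $(s,x_0)$ and integration gives $y(x_0)=y(s)-(x_0-s)=-(x_0-s)<0$, a contradiction. Hence $y\le 0$ throughout $[0,\delta]$. But then, $Z$ being null, $y(x)<0$ for a.e.\ $x$, so $y'=1$ a.e.\ and therefore $y(x)=x>0$ for every $x\in(0,\delta]$, contradicting $y\le 0$. This shows that the Cauchy problem $(1)$ has no Carathéodory solution on any $[0,\delta]$, which is exactly the (forward) assertion of the theorem.

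I expect the main obstacle to be the first key step: rigorously justifying that $\mu(Z)=0$, i.e.\ that the derivative of the absolutely continuous $y$ vanishes almost everywhere on its zero level set, and then combining this with the sign analysis so that the relay behaviour of $f$ at $0$—the vector field pointing toward $0$ from both sides while $f(0)\neq 0$—is converted into a genuine contradiction rather than a merely heuristic chattering obstruction. A secondary point worth flagging is that $0$ is attracting in forward time, so a backward solution $y(x)=-x$ does exist on $[-\delta,0]$; this is harmless because the Cauchy problem here is posed forward from the initial point (as in Theorem \ref{t:4.3}, where the datum sits at the left endpoint), and forward non-existence is precisely the negative answer to Question \ref{q:1.2}.
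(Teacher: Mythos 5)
Your proof is correct and takes essentially the same route as the paper: the identical relay function, with the contradiction obtained by integrating $y'=f(y(x))$ over a maximal sign-constant interval adjoining a zero of $y$ (your $s=\sup\{x\le x_0: y(x)=0\}$ is exactly the paper's left endpoint $a_0$ of a component of the complement of the zero set). Two small notes: the step $\mu(Z)=0$ that you flag as the main obstacle is in fact superfluous, since once $y\le 0$ on $[0,\delta]$ you have $f(y(x))=1$ for \emph{every} $x$ (including zeros of $y$, because $f(0)=1$), so $y'=1$ a.e.\ and $y(x)=x>0$ immediately; and your explicit forward-in-time framing, together with the observation that $y(x)=-x$ solves the problem backward, actually tightens a point the paper glosses over with ``for definiteness we assume that $a_0\in\mathbb{R}$'' --- a component of the form $(-\infty,b_0)$ with $y>0$ there yields no contradiction, and one must use (as you do, and as the paper implicitly does via $[0,1]\not\subseteq F$ and $0\in F$) an interval of nonzero values lying to the right of a zero.
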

\begin{proof}

We consider the function $f(y)=\left\{\begin{array}{lc}
  -1, & y> 0, \\
  1, & y\leq 0. \\
\end{array}\right.$
Clearly, $f$ is a quasicontinuous upper semicontinuous function.

Suppose that there exists a Caratheodory's solution $y_0$ of the Cauchy problem (1), i.e.
$$
y_0(x)=\int\limits_0^x f(y(t))dt.\eqno (2)
$$
Consider the set $F=\{x\in \mathbb{R}:y_0(x)=0\}$. Since $y_0$ is absolutely continuous, $F$ is closed. We show that $F\neq\mathbb{R}$.

Suppose that $[0,1]\subseteq F$. Then $0=y_0(1)=\int\limits_0^1 f(y(t))dt=\int\limits_0^1 dt=1$, a contradiction.

Thus $F\neq \mathbb{R}$, that is, $G=\mathbb{R}\setminus F\neq \emptyset$. Let $G=\bigsqcup\limits_{s\in S}I_s$, where $I_s=(a_s,b_s)$. Fix $s_0\in S$ and put $I_0=(a_0,b_0)$. For difiniteness we assume that $a_0\in \mathbb{R}$. Then $a_0\in F$, that is, $y_0(a_0)=0$. Moreover, $y_0(x)\neq 0$ for every $x\in I_0$. Taking into account that $y_0$ is a continuous function we obtain that either $y(x)> 0$ for all $x\in I_0$, or $y(x)< 0$ for all $x\in I_0$.

Let $y(x)> 0$ for all $x\in I_0$. For any $x_0\in I_0$ we have
$$
0<y_0(x_0)=y_0(x_0)-y_0(a_0)=\int\limits_{a_0}^{x_0} y'(x)dx=-\int\limits_{a_0}^{x_0}dx=-(x_0-a_0)<0,
$$
a contradiction. The case of $y(x)< 0$ for all $x\in I_0$ is similar.
\end{proof}

\begin{remark}
For the given function $f$ the Cauchy problem
$$
\left\{\begin{array}{lc}
  y'(x)=f(y(x)), \\
  y(x_0)=y_0, \\
\end{array}\right.
$$
has not any Caratheodory's solution $y:\mathbb R\to\mathbb R$ for every initial condition.
\end{remark}

A function $f:[a,b]\to\mathbb R$ is called  a {\it Darboux function}, if for every interval $X\subseteq [a,b]$ the set $f(X)$ is an interval.

The following example shows that Question \ref{q:1.2} has the negative answer even when $f$ is a Darboux function with respect to the second variable.

\begin{example}
There exists an upper semicontinuous quasicontinuous Darboux function $f:\mathbb{R}\rightarrow[-1,1]$ such that the Cauchy problem $(1)$
has not any Caratheodory's solution.
\end{example}
\begin{proof}
We consider the function
$$
\left\{\begin{array}{lc}
  1, y\leq 0\\
  \sin \frac{\pi}{y}, y>0\,.\\
\end{array}\right.
$$
Clearly, $f$ is a upper semicontinuous quasicontinuous Darboux function.

Suppose that there exists an absolutely continuous function $y_0:\mathbb{R}\rightarrow\mathbb{R}$ which is a Caratheodory's solution of problem (1), i.e. $y_0$ has the form (2). We show that there exists $x_0>0$ such that $y_0(x_0)>0$. Assume that $y_0(x)\leq 0$ for all $x>0$. Then according to (2) for all $x>0$ we have
$$
y(x)=\int\limits_0^x f(y(t))dt=x,
$$
a contradiction.

We choose $x_0>0$ such that $y_0(x_0)>0$ and find $n_0\in \mathbb{N}$ such that $y_0(x_0)>\frac{1}{2n_0-1}$. Since $y_0(0)<\frac{1}{2n_0}<y_0(x_0)$, there exists a maximal $a\in (0,x_0)$ such that $y_0(a)=\frac{1}{2n_0}$. Then $y_0(x)>\frac{1}{2n_0}$ for all $x\in (a,x_0]$. Since $y_0(a)<\frac{1}{2n_0-1}<y_0(x_0)$, there exists a minimal $b\in (a,x_0)$ such that $y_0(b)=\frac{1}{2n_0-1}$. Then $y_0(x)<\frac{1}{2n_0-1}$ for all $x\in (a,b)$.

Hence $y_0(x)\in (\frac{1}{2n_0},\frac{1}{2n_0-1})$, in particular, $f(y_0(x))<0$ for all $x\in (a,b)$. Now we have
$$
\tfrac{1}{2n_0-1}=y_0(b)=y_0(a)+\int\limits_a^b f(y_0(x))dx\leq y_0(a)=\tfrac{1}{2n_0},
$$
a contradiction.
\end{proof}

\end{document}